\theoremstyle{plain}
\newtheorem{thm}{Theorem}
\theoremstyle{remark}
\newtheorem*{rem}{Remark}
\theoremstyle{definition}
\newtheorem{ex}{Example}
\title{A deep learning approach to multi-marginal optimal transport \\ via Hilbert space embeddings of probability measures}
\author[1]{Yumiharu Nakano\thanks{Corresponding author. nakano@comp.isct.ac.jp}}
\author[1]{Takafumi Saito\thanks{saito.t.8e06@m.isct.ac.jp}}
\affil[1]{Department of Mathematical and Computing Science,
            Institute of Science Tokyo}
\date{\today}
\begin{document}

\maketitle

\begin{abstract}
We propose a numerical method for solving the multi-marginal Monge problem, which extends the classical Monge formulation to settings involving multiple target distributions.
Our approach is based on the Hilbert space embedding of probability measures and employs a penalization technique
using the maximum mean discrepancy to enforce marginal constraints.
The method is designed to be computationally efficient, enabling GPU-based implementation suitable for large-scale problems.
We confirm the effectiveness of the proposed method through numerical experiments using synthetic data.
\begin{flushleft}
{\bf Key words}: Multi-marginal optimal transport, Maximum mean discrepancy, Deep learning,
Hilbert space embeddings of probability measures. 
\end{flushleft}
\begin{flushleft}
{\bf AMS MSC 2020}:
Primary, 49Q22; Secondary, 68T07
\end{flushleft}
\end{abstract}



\section{Introduction}\label{sec:1}

Our aim in this paper is to propose a numerical method for solving the multi-marginal optimal transport map problem,
formulated as follows. Let $\mu_1, \ldots, \mu_N$ be given Borel probability measures on $\mathbb{R}^d$. The multi-marginal Monge problem seeks a minimizer of
\begin{equation}
\label{eq:1}
\inf_{T \in \mathcal{T}(\mu_1, \ldots, \mu_N)} M(T),
\end{equation}
where
\[
M(T) = \int_{\mathbb{R}^d} c(T_1(x), \ldots, T_N(x))\, \mu_1(dx),
\]
for a given measurable cost function $c: \mathbb{R}^{Nd} \to [0, \infty)$, and $\mathcal{T}(\mu_1, \ldots, \mu_N)$ denotes the set of
all $N$-tuples $T = (T_1, \ldots, T_N)$ such that $T_1(x) = x$ for all $x \in \mathbb{R}^d$, and each $T_i: \mathbb{R}^d \to \mathbb{R}^d$
is Borel measurable and satisfies $T_i \sharp \mu_1 = \mu_i$ for $i = 2, \ldots, N$.
Here, $T\sharp \mu$ denotes the pushforward of $\mu$ with $T$, i.e., $T\sharp \mu(A)=\mu(T\in A)$ for any Borel set $A$.

Optimal transport theory, which provides a way to compute distances between probability measures, has recently gained increasing attention in areas such as machine learning.
Broadly, optimal transport theory consists of two main formulations: the Monge problem and the Monge-Kantorovich problem.
The Monge formulation seeks a deterministic map that transports one distribution to another while minimizing a given cost.
In contrast, the Monge-Kantorovich formulation relaxes this requirement by allowing transport plans, i.e., joint distributions with prescribed marginals,
thereby making the problem convex and more tractable.
A rich body of literature has since been developed around these formulations (see, e.g., \cite{san:2015, vil:2003, vil:2008} and the references therein).
Recently, \cite{sai-nak:2025} developed a GPU-based numerical solver for the Monge problem that is capable of handling large-scale data.
For the Monge-Kantorovich formulation, several practical Python libraries are already available, including \texttt{GeomLoss} \cite{Geom}, \texttt{KeOps} \cite{keO},
and the \texttt{OTT} library \cite{OTT}.

The focus of this paper is the multi-marginal Monge problem, which generalizes the classical Monge problem to multiple target marginals.
Although the \texttt{POT} library provides a solver for discrete multi-marginal problems, efficient numerical methods for continuous multi-marginal Monge maps remain limited.
This problem also underlies several related topics, such as Wasserstein barycenters \cite{puc-etal:2020}, which have important applications including shape morphing.
Libraries such as \texttt{POT} and \texttt{KeOps} also support numerical computation of Wasserstein barycenters.
This type of multi-marginal interpolation also finds applications in generative modeling, distributional robustness, and structured data alignment.

In this paper, we aim to develop a numerical method for the multi-marginal Monge problem in the setting where all state spaces have the same dimension.
Our primary goal is to construct an algorithm that supports large-scale GPU-based computations.
The proposed method is based on the Hilbert space embedding theory of probability measures,
which is applied by \cite{nak:2023} to the Schr{\"o}dinger bridge problem.
We adapt this framework to the multi-marginal Monge problem as in \cite{sai-nak:2025} and employ a penalization approach, using the maximum mean discrepancy as a penalty function
to enforce the marginal constraints.
The key advantages of our approach are: (i) computational efficiency, allowing for GPU-based implementation;
(ii) flexibility in handling non-Gaussian and structured distributions;
(iii) a theoretical guarantee for convergence under mild assumptions.
We demonstrate the effectiveness of the proposed method through numerical experiments on synthetic data involving both Gaussian and non-Gaussian marginals.

This paper is organized as follows. In Section \ref{sec:2}, we review the necessary background on the Hilbert space embedding of probability measures.
Section~\ref{sec:3} presents a general algorithm for the multi-marginal problem and several numerical experiments.
A theoretical convergence result is proved in Section~\ref{sec:4}.

\section{Hilbert space embeddings of probability measures}\label{sec:2}

We shall give a quick review of theory of Hilbert space embeddings of probability measures, as developed in \cite{sri-etal:2010}.
Denote by $\mathcal{P}(\mathbb{R}^d)$ the set of all Borel probability measures on $\mathbb{R}^d$.
Let $K$ be a symmetric and
 strictly positive definite kernel on $\mathbb{R}^{d}$, i.e.,
 $K(x,y)=K(y,x)$ for
 $x,y\in\mathbb{R}^{d}$ and
  for any positive distinct $x_{1},\ldots,x_{N}\in\mathbb{R}^{d}$ and
  $\alpha=(\alpha_{1},\ldots,\alpha_{N})  ^{\mathrm{T}}\in\mathbb{R}
  ^{N}\backslash\{0\}$,
\begin{equation*}
\sum^{N}_{j,\ell=1}
\alpha_{j}\alpha_{\ell}K(x_{j},x_{\ell})>0.
\end{equation*}
Assume further that $K$ is continuous on $\mathbb{R}^d\times\mathbb{R}^d$.
Then, there exists a unique Hilbert space
$\mathcal{H}$
 such that $K$ is a reproducing kernel on
 $\mathcal{H}$ with norm $\|\cdot\|$ (see, e.g., \cite{wen:2010}).
 Then consider the maximum mean discrepancy (MMD) $\gamma_{K}$ defined by
\[
 \gamma_{K}(\mu_{0},\mu_{1}):=
 \sup_{f\in\mathcal{H},\|f\|\leqq 1}
 \left|\int_{\mathbb{R}^{d}}f(x)\mu_{0}(dx)- \int_{\mathbb{R}^{d}}f(x)\mu_{1}(dx)\right|, \quad  \mu_{0},\mu_{1}\in\mathcal{P}(\mathbb{R}^{d}).
 \]
It is known that if $K$ is an integrally strictly positive definite, i.e.,
\[
 \int_{\mathbb{R}^d\times\mathbb{R}^d} K(x,y)\mu(dx)\mu(dy) >0
\]
for any finite and non-zero signed Borel measures $\mu$ on $\mathbb{R}^d$,
then $\gamma_K$ defines a metric on $\mathcal{P}(\mathbb{R}^d)$.
Examples of integrally strictly positive definite kernels include the Gaussian kernel
$K(x,y)=e^{-\alpha |x-y|^2}$, $x,y\in\mathbb{R}^d$, where $\alpha>0$ is a constant, and
the Mat{\'e}rn kernel $K(x,y)=K_{\alpha}(|x-y|)$, $x,y\in\mathbb{R}^d$, where $K_{\alpha}$ is the modified Bessel function of
order $\alpha>0$. It is also known that Gaussian kernel as well as Mat{\'e}rn kernel metrize the weak topology on $\mathcal{P}(\mathbb{R}^d)$.
See \cite{sri-etal:2010} and \cite{nak:2023}.

A main advantage of using $\gamma_K$ rather than other distances such as the Prokhorov distance,
the total variation distance, or the Wasserstein distance is that it is relatively easy to handle analytically due to its linear structure.
Indeed, by our boundedness assumption, for $\mu,\nu\in\mathcal{P}(\mathbb{R}^d)$
\[
 \gamma_K(\mu,\nu)=\left\|\int_{\mathbb{R}^d}K(\cdot,x)\mu(dx) - \int_{\mathbb{R}^d}K(\cdot,x)\nu(dx)\right\|_{\mathcal{H}},
\]
whence by the reproducing property,
\begin{equation}
\label{eq:gamma}
 \gamma_K(\mu,\nu)^2 = \int_{\mathbb{R}^d\times\mathbb{R}^d}K(x,y)(\mu-\nu)(dx)(\mu-\nu)(dy)
\end{equation}
(see Section 2 in \cite{sri-etal:2010}).
From this representation, given IID samples $X_1,\ldots,X_M\sim \mu$ and $Y_1,\ldots,Y_M\sim\nu$,
an unbiased estimator of $\gamma^2(\mu,\nu)$ is given by
\begin{equation}
\label{eq:2}
 \bar{\gamma}_K(\mu,\nu)^2 := \frac{1}{M(M-1)}\sum_i\sum_{j\neq i} K(X_i,X_j) - \frac{2}{M^2}\sum_{i,j}K(X_i,Y_j) + \frac{1}{M(M-1)}\sum_i\sum_{j\neq i}K(Y_i,Y_j)
\end{equation}
(see \cite{gre-etal:2006}).

\section{Algorithm and experiments}\label{sec:3}

\subsection{Deep learning algorithm}\label{sec:3.1}

We shall approximate $N$-tuple of transport maps by one of fully connected and feed-forward neural networks.
To this end, let $\sigma$ be a continuous and non-constant function on $\mathbb{R}$, which is the so-called activation function,
and  let $L\in\mathbb{N}$ be the number of hidden layers.
For a given $D=(D_1,\ldots,D_L)\in\mathbb{N}^L$, we denote by $\mathcal{N}_D$ the set of all mappings $G$ of the form
\[
 G(x) = A_{L+1}\circ \sigma \circ A_{L-1}\circ \cdots \circ \sigma \circ A_1(x), \quad x\in\mathbb{R}^d,
\]
where $A_{\ell}$'s are affine transformations defined by $A_{\ell}(\xi)=W_{\ell}\xi + b_{\ell}$, $\xi\in\mathbb{R}^{D_{\ell-1}}$,
for some $W_{\ell}\in\mathbb{R}^{D_{\ell}\times D_{\ell-1}}$ and $b_{\ell}\in\mathbb{R}^{D_{\ell}}$, $\ell=1,\ldots,L$.
Here we have set $D_0=D_{L+1}=d$ and denoted
$\sigma_i(\xi)=(\sigma_i(\xi_1),\ldots,\sigma_i(\xi_{D_{\ell}}))^{\mathsf{T}}$ for $\xi=(\xi_1,\ldots,\xi_{D_{\ell}})^{\mathsf{T}}\in\mathbb{R}^{D_{\ell}}$ by abuse of notation.
In practical applications, sample data is often given rather than an analytical form of the distribution,
so it is convenient to construct an approximate solution under a probabilistic formulation.  Consider the corresponding penalty method
\begin{equation}
\label{eq:3.1} \min M_{\lambda}(T) \;\;\text{s.t.}\;\; T_2,\ldots, T_N\in \mathcal{N}_D,
\end{equation}
with
\[
M_{\lambda}(T)=\mathbb{E}\left[c(X, T_2(X), \ldots,T_N(X))\right]
 + \sum_{i=2}^N\lambda_i\gamma_K(\mathbb{P}(T_i(X))^{-1}, \mu_i)^2
\]
where $\mathbb{E}$ is the expectation operator under a probability space $(\Omega,\mathcal{F},\mathbb{P})$,
$\lambda$ denotes $(N-1)$-tuple $(\lambda_2,\ldots,\lambda_N)$ with $\lambda_i>0$ for $i=2,\ldots,N$,
and $\mathbb{P}(T_i(X))^{-1}\in\mathcal{P}(\mathbb{R}^d)$ is the probability law of $T_i(X)$ under $\mathbb{P}$, i.e.,
$\mathbb{P}(T_i(X))^{-1}(A)=\mathbb{P}(T_i(X)\in A)$ for any Borel set $A\subset\mathbb{R}^d$.

To describe an algorithm based on a stochastic optimization for solving
\eqref{eq:3.1}, we represent $\mathcal{N}_D$ with parametrized set $\{T_{\theta}\}_{\theta\in\Theta}$ and
replace $\gamma_K$ with the sample-based one $\bar{\gamma}_K$ in \eqref{eq:2}.
Thus we are led to the following problem:
\begin{equation}
\label{eq:3}
 \min F(\theta) \;\;\text{s.t.}\;\; \theta\in\Theta
\end{equation}
with
\begin{align*}
F(\theta)&=\frac{1}{\prod^{N}_{i=2}\lambda_{i} M}\sum_{i=1}^M
c(X_{1,i},T_{2,\theta}(X_{1,i}), \ldots, T_{N,\theta}(X_{1,i})) \\
&\quad +\sum^{N}_{h=2}\frac{\lambda_{h}}{\prod^{N}_{i=2}\lambda_{i}}
\left(\frac{1}{M(M-1)}\sum_{i=1}^{M}\sum_{j\neq i}^{M}K(T_{h,\theta}(X_{1,i}),
T_{i,\theta}(X_{1,j})) - \frac{2}{M^{2}}\sum^{M}_{i,j=1}K(T_{h,\theta}(X_{1,i}),X_{h,j})\right),
\end{align*}
where $\{X_{h,i}\}_{i=1}^M$ is an i.i.d. with common distribution $\mu_h$ and
$T_{h,\theta}\in\mathcal{N}_D$ for each $h=1,\ldots,N$.

A learning algorithm for solving Problem~\eqref{eq:3} is summarized in Algorithm~\ref{alg:1}.
\begin{algorithm}[h]
\caption{Learning algorithm using empirical MMD}
\label{alg:1}
\DontPrintSemicolon

\KwData{Number of iterations $n$, batch size $M$, number of marginals $N$, weight parameters $\lambda_i > 0$ for $i = 2, \ldots, N$, learning rate $\eta$}
\KwResult{Estimated transport map $T$}

Initialize neural network parameters $\theta$\;
\For{$k = 1$ \KwTo $n$}{
    Draw i.i.d.\ samples $\{X_{h,i}\}_{i=1}^M$ from $\mu_h$ for each $h = 2, \ldots, N$\;

    Compute empirical objective $F(\theta)$ in Equation \eqref{eq:3} using the minibatch samples\;

    Update \(\theta\) by taking a gradient step:
    $\theta \leftarrow \theta - \eta \nabla_\theta F(\theta)$
}
\end{algorithm}

\subsection{Numerical experiments}\label{sec:3.2}

All numerical experiments are implemented in PyTorch and executed on a machine with an AMD EPYC 9654 CPU and 768~GB of RAM.
We conduct three experiments using synthetic datasets. In each experiment, the dataset size is fixed at $500$ samples.

The Gaussian kernel $K(x, y) = \exp(-|x - y|^2)$, cost function $c(x, y, z) = |x - y|^2 + |y - z|^2$, and the Adam optimizer with a learning rate of
$0.0001$ are used. The transport map $T(x)$ is modeled using a multi-layer perceptron (MLP) with two hidden layers.
All loss figures in this section use a logarithmic scale on the x-axis (epochs) for better visualization of convergence behavior.
Figures~\ref{fig:weight_gauss} and \ref{fig:weight_moon} also use a logarithmic scale on the y-axis.

\subsubsection{Interpolation of Shifted Gaussian Distributions}
\label{sec:3.1.2}

In this experiment, we consider three two-dimensional uncorrelated normal distributions. The initial distribution has mean $0$ and variance $1$,
the second has mean $3$ and standard deviation $1$, and the final has mean $10$ and standard deviation $1$.
The corresponding generated samples are shown in Figures~\ref{fig:gauss3} and \ref{fig:gauss10},
which correspond to the setting with Adam optimizer and $1/\lambda_2 = 1/\lambda_3 = 0.01$,
demonstrating that the proposed method accurately captures the target distributions.

\begin{figure}[h]
\centering
\begin{minipage}[b]{0.48\linewidth}
  \centering
  \includegraphics[width=60mm, bb = 0 0 461 346]{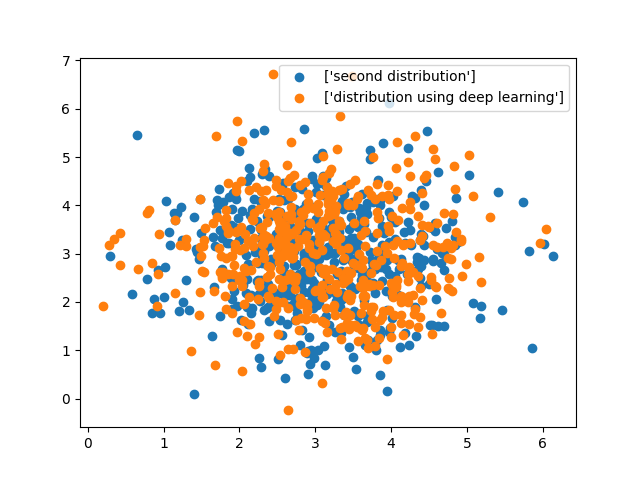}
  \caption{Generated vs. Target (Mean 3)}
  \label{fig:gauss3}
\end{minipage}
\begin{minipage}[b]{0.48\linewidth}
  \centering
  \includegraphics[width=60mm, bb = 0 0 461 346]{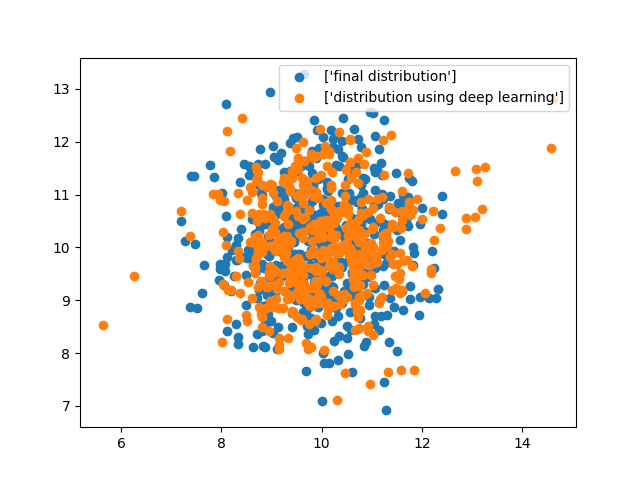}
  \caption{Generated vs. Target (Mean 10)}
  \label{fig:gauss10}
\end{minipage}
\end{figure}

Figure~\ref{fig:opt_gauss} shows the training loss across four optimizers. Adam and RMSprop achieve faster and more stable convergence compared to SGD and Adagrad.
Notably, convergence is typically reached within 2000 epochs for the former two.
Figure~\ref{fig:weight_gauss} displays the training loss for different values of the weight parameters $\lambda_2$ and $\lambda_3$.
Despite varying the weights over several orders of magnitude, the convergence behavior remains largely similar, indicating robustness with respect to this hyperparameter.

\begin{figure}[h]
\centering
\begin{minipage}[b]{0.49\linewidth}
  \centering
  \includegraphics[width=60mm, bb = 0 0 461 346]{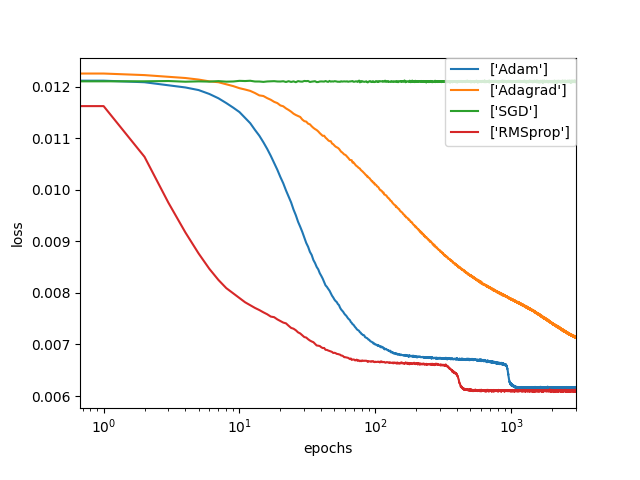}
  \caption{Loss vs. Optimizers.}
  \label{fig:opt_gauss}
\end{minipage}
\begin{minipage}[b]{0.49\linewidth}
  \centering
  \includegraphics[width=60mm, bb = 0 0 461 346]{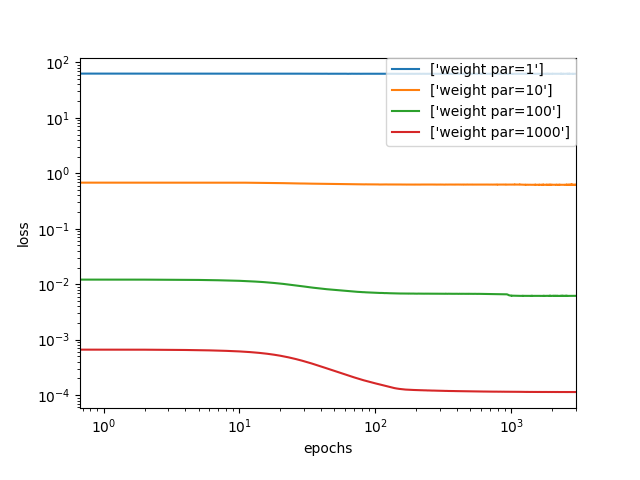}
  \caption{Loss vs. Weight Parameters}
  \label{fig:weight_gauss}
\end{minipage}
\end{figure}

Table~\ref{tab:optim} summarizes the mean and standard deviation of the generated samples under different optimizers.
Both Adam and RMSprop successfully produce samples close to the target mean values (3 and 10) with low variance, indicating stable and accurate transport.
In contrast, Adagrad and SGD perform poorly, particularly for the final distribution, where the generated means are significantly off-target and variances are large.
This supports the earlier observations from the loss curves in Figure~\ref{fig:opt_gauss}.
Table~\ref{tab:lambda} shows how the weight parameters $\lambda:=\lambda_2 = \lambda_3$ affect the sample statistics.
Among the tested values, $\lambda = 10$ and $\lambda = 100$ yield stable results, with both the means and variances close to the targets.
In contrast, the cases $\lambda = 1$ and $\lambda = 1000$ show significant deviations in both mean and variance.
These observations suggest that excessively small or large values of the weight parameters can degrade performance, and moderately large values are more
effective in enforcing the marginal constraints through the MMD penalty.

\begin{table}[h]
\centering
\caption{Performance comparison across optimizers (mean, standard deviation).}
\label{tab:optim}
\begin{tabular}{lcc}
\hline
Optimizer & Second (mean, SD) & Final (mean, SD) \\
\hline
Adam      & 3.0374, 1.0036     & 10.0204, 1.0103 \\
Adagrad   & 1.9073, 1.8980     & 0.0321, 1.8998 \\
SGD       & 0.1050, 0.0262     & 0.0032, 0.0420 \\
RMSprop   & 2.9813, 1.0012     & 9.9636, 1.0186 \\
\hline
\end{tabular}
\end{table}

\begin{table}[h]
\centering
\caption{Performance comparison across weight parameters (mean, standard deviation).}
\label{tab:lambda}
\begin{tabular}{lcc}
\hline
\(\lambda_2 = \lambda_3\) & Second (mean, SD) & Final (mean, SD) \\
\hline
1     & 3.0379, 1.0491    & -10.3407, 23.0344 \\
10    & 2.9772, 1.0889    & 9.9741, 1.0575   \\
100   & 3.0374, 1.0036     & 10.0204, 1.0103   \\
1000  & 3.0833, 0.9737    & -9.5127, 17.1537   \\
\hline
\end{tabular}
\end{table}

\subsubsection{Interpolation of Gaussian and Two Non-Gaussian Structured Distributions}
\label{sec:3.1.1}

In this experiment, the initial distribution is a two-dimensional standard normal distribution. The second distribution is the well-known two-moons dataset,
and the final distribution is the two-circles dataset \cite{scikit-learn}.
Results are typically obtained after approximately 5000 training epochs.

\begin{figure}[h]
\centering
\begin{minipage}[b]{0.49\linewidth}
  \centering
  \includegraphics[width=60mm, bb = 0 0 461 346]{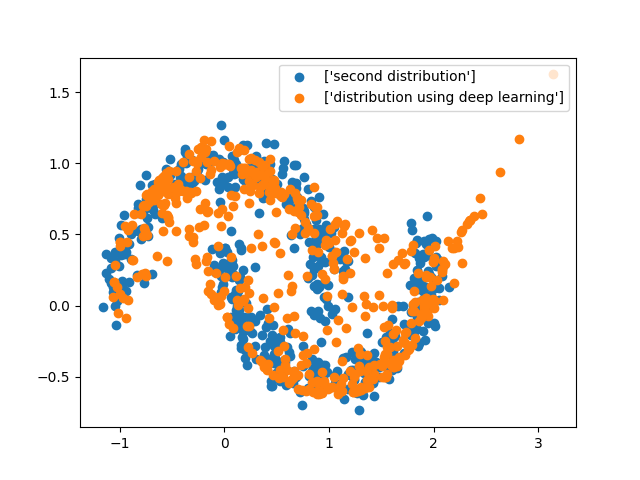}
  \caption{Generated vs. Target (Two-Moons)}
  \label{fig:moon}
\end{minipage}
\begin{minipage}[b]{0.49\linewidth}
  \centering
  \includegraphics[width=60mm, bb = 0 0 461 346]{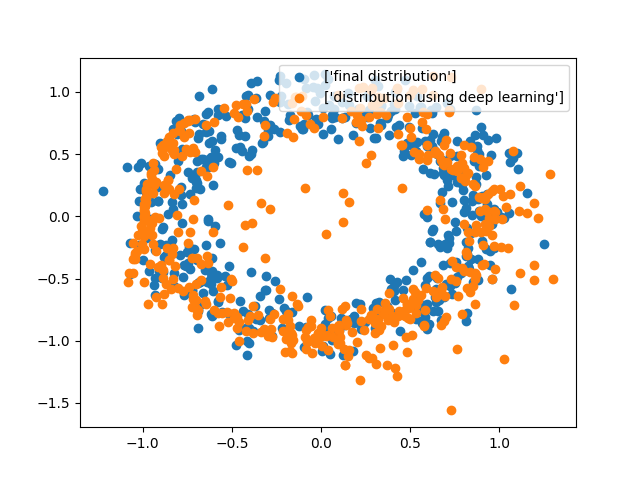}
  \caption{Generated vs. Target (Two-Circles)}
  \label{fig:circle}
\end{minipage}
\end{figure}

Figures~\ref{fig:moon} and~\ref{fig:circle} show the output of the proposed method using the Adam optimizer and regularization parameters
$1/\lambda_2 = 1/\lambda_3 = 0.01$. The generated samples closely match the target distributions.

\begin{figure}[h]
\centering
\begin{minipage}[b]{0.49\linewidth}
  \centering
  \includegraphics[width=60mm, bb = 0 0 461 346]{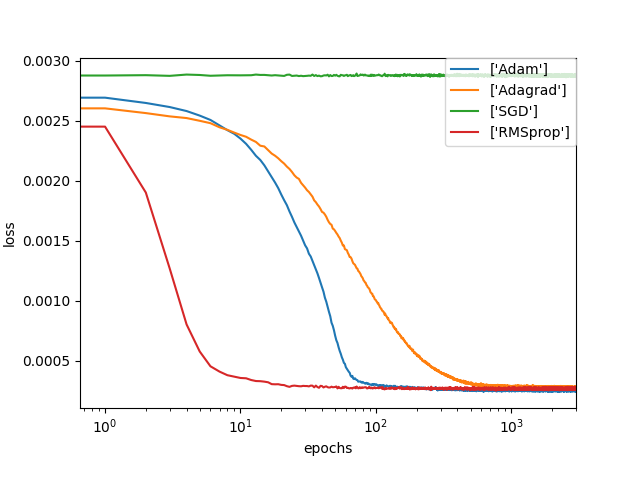}
  \caption{Loss vs.~Optimizers.}
  \label{fig:opt_moon}
\end{minipage}
\begin{minipage}[b]{0.49\linewidth}
  \centering
  \includegraphics[width=60mm, bb = 0 0 461 346]{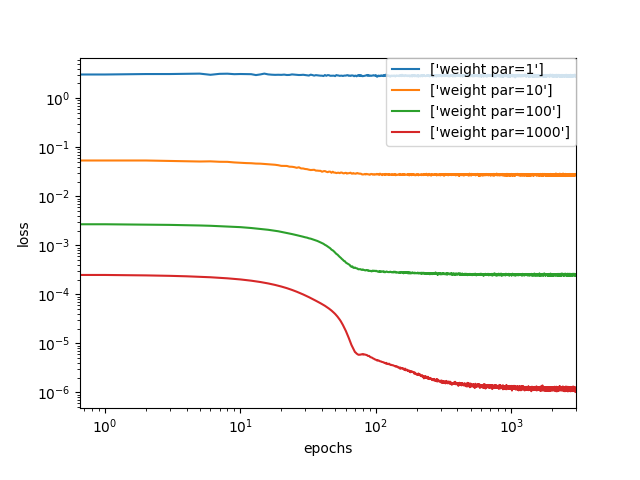}
  \caption{Loss vs.~Weight parameters.}
  \label{fig:weight_moon}
\end{minipage}
\end{figure}

Figure~\ref{fig:opt_moon} reports the loss curves for the shifted Gaussian interpolation task.
Again, Adam and RMSprop exhibit better performance in terms of both convergence speed and final loss values, while SGD and Adagrad show slower convergence.
Figure~\ref{fig:weight_moon} presents the training loss under different weight parameters in the Gaussian case.
Although slight variations in convergence speed are observed, the overall loss curves remain comparable.

\section{Convergence result}\label{sec:4}

In this section we show a theoretical convergence result.
To this end, we impose the following conditions on $\mu_i$'s, the kernel $K$, and the activation function $\sigma$:
\begin{enumerate}
\item[(A1)] For each $i=1,\ldots,N$, the probability measure $\mu_i$ vanishes on $(d-1)$-rectifiable sets and satisfies
\[
 \int_{\mathbb{R}^d}|x|^2 \mu_i(dx) < \infty.
\]
\item[(A2)] $K\in C(\mathbb{R}^d\times\mathbb{R}^d)$ is bounded and Lipschitz continuous on $\mathbb{R}^d\times\mathbb{R}^d$.
\item[(A3)] $\gamma_K$ is a metric on $\mathcal{P}(\mathbb{R}^d)$ and metrizes the weak topology on $\mathcal{P}(\mathbb{R}^d)$.
\item[(A4)] The function $\sigma$ is continuous and non-constant on $\mathbb{R}$ such that
\[
 |\sigma(r)|\le C(1+|r|^2), \quad r\in\mathbb{R},
\]
for some positive constant $C$.
\end{enumerate}

\begin{rem}
It is well-known that if $\mu_i$ is absolutely continuous with respect to the Lebesgue measure then it vanishes on
$(d-1)$-rectifiable sets.
\end{rem}

\begin{rem}
As stated in Section \ref{sec:2}, Gaussian and Mat{\'e}rn kernels satisfy (A2) and (A3).
\end{rem}

\begin{rem}
If $\sigma$ is the so-called ReLU activation function defined by $\sigma(x)=\max\{x,0\}$,
then the condition (A4) is satisfied.
\end{rem}

Then we shall prove the convergence of our deep learning algorithm in the case where the cost function $c$ is given by
\begin{equation}
\label{eq:5}
 c(x_1,\ldots,x_N) = \sum_{i<j}|x_i -x_j|^2, \quad x_1,\ldots,x_N\in\mathbb{R}^d.
\end{equation}
In this case, the problem \eqref{eq:1} is equivalent to
\[
 \sup\left\{\mathbb{E}\left[\sum_{i<j}X_i^{\mathsf{T}}X_j\right]: X_i\sim\mu_i, \; 1\le i\le N\right\}
\]
as well as
\[
 \sup\left\{\mathbb{E}\left|\sum_{i=1}^{N}X_i\right|^2 :  X_i\sim\mu_i, \; 1\le i\le N\right\}.
\]
Moreover, this problem is equivalent to the one of finding Wasserstein barycenters and
\[
 \inf\left\{\mathbb{E}\sum_{i=1}^N|X_i-\overline{S}|^2 :   X_i\sim \mu_i, \; 1\le i\le N\right\}.
\]
In particular, if an $N$-tuple $(X_1,\ldots,X_N)$ is optimal then the distribution of $\overline{S}=\frac{1}{N}\sum_{i=1}^NX_i$ is
a Wasserstein barycenter. We refer to \cite{puc-etal:2020} for details.

Under (A1) there exists a unique solution
$T^*=(T_1^*,\ldots, T_N^*)\in\mathcal{T}(\mu_1,\ldots,\mu_N)$ to the Monge problem \eqref{eq:1} with cost function $c$ given by \eqref{eq:5}.
See \cite{gan-swi:1998} and \cite{rus-uck:2002}.

In the algorithm described in Section \ref{sec:3}, the numbers $D_{\ell}$'s in hidden layers are fixed.
Here, to prove convergence we assume that $D=(D_1,\ldots,D_L)$ can be arbitrary chosen.
So denote $N=\bigcup_{D\in\mathbb{N}^L}\mathcal{N}_D$ and let $\mathcal{N}^*$ be the set of all
$N$-tuple $T=(T_1,\ldots,T_N)$ such that $T_1(x)=x$ for all $x\in\mathbb{R}^d$ and
$T_i\in\mathcal{N}$ for each $i=2,\ldots,N$.

For $i=2,\ldots, N$, define
\begin{align*}
K_i(x,y):=K(x,y)-\int_{\mathbb{R}^{d}}K(x,y^{\prime})
\mu_{1}(dy^{\prime}) -\int_{\mathbb{R}^{d}} K(x^{\prime},y)\mu_{1}(dx^{\prime}).
\end{align*}
Then, by \eqref{eq:gamma}, if two random variables $X$ and $\tilde{X}$ on some probability space $(\Omega,\mathcal{F},\mathbb{P})$
are mutually independent and both follow $\mu$, then we have
\[
 \gamma_K(\mu,\mu_i)^2= \mathbb{E}_{\mathbb{P}}\left[K_i(X,\tilde{X})\right]
  + \int_{\mathbb{R}^d\times\mathbb{R}^d} K(x^{\prime},y^{\prime})\mu_i(dx^{\prime})\mu_1(dy^{\prime}).
\]
Consequently, our problem is described as
\begin{equation}
\label{eq:7}
 \inf_{T\in\mathcal{N}^*}M_{\lambda}(T).
\end{equation}
Here, note that in the present setting, the cost $M_{\lambda}(T)$ is given by
\[
 M_{\lambda}(T)= \mathbb{E}\left[\sum_{i<j}|T_i(X)-T_j(X)|^2  + \sum_{i=2}^N\lambda_i K_i(T_i(X), T_i(\tilde{X}))\right]
  + \sum_{i=2}^N\lambda_i\int_{\mathbb{R}^d\times\mathbb{R}^d} K(x^{\prime},y^{\prime})\mu_i(dx^{\prime})\mu_i(dy^{\prime}),
\]
where $T_1(x)=x$ for $x\in\mathbb{R}^d$.
Note also that under (A1)--(A4) we have $0\le M(T)<\infty$ for any $T\in\mathcal{N}^*$.

Let $\varepsilon^{h}$ and $\lambda_i^{h}$, $i=1,\ldots,N$, be positive functions of $h\in (0,1)$ such that
\[
 \varepsilon^h\searrow 0, \quad \lambda_i^h\nearrow +\infty, \;\; i=1,\ldots,N, \quad h\searrow 0.
\]
Then for each $h$ take $\varepsilon^h$-optimal map $T^h$ to
the problem \eqref{eq:7} with $\lambda=\lambda^h$.
Thus $T^h=(T_1^h,\ldots,T_N^h)\in\mathcal{N}^*$ satisfies
\[
 M_{\lambda^h}(T^h) - \varepsilon^h \le \inf_{T\in\mathcal{N}^*}M_{\lambda^h}(T).
\]

Let $L^2(\mu_1)$ the set of all Borel measurable mappings $f:\mathbb{R}^d\to\mathbb{R}^d$ such that
\[
 \|f\|:= \sqrt{\int_{\mathbb{R}^d}|f(x)|^2\mu_1(dx)}<\infty.
\]
Then we impose the following:
\begin{enumerate}
\item[(A5)] $\mathcal{N}$ is dense in $L^2(\mu_1)$ with respect to the norm $\|\cdot\|$.
\end{enumerate}

\begin{ex}
\begin{enumerate}
\item In the case of $L=1$, if $\sigma$ is bounded, then the condition (A5) hold by the classical universal approximation theorem given in \cite[Theorem 1]{hor:1991}.
\item In general cases of $L\ge 1$, if $\sigma$ is the ReLU activation function
then the condition (A5) hold by Theorem 4.16 in \cite{kid-lyo:2020}. Actually this is an universal approximation theorem for
$L^2(\mathbb{R}^d;\mathbb{R}^d)$ rather than $L^2(\mu_1)$, but the arguments in the proof can be applied to the space $L^2(\mu_1)$.
\end{enumerate}
\end{ex}

Here is our main convergence result.
\begin{thm}
\label{thm:1}
Let $c$ be given by $\eqref{eq:5}$.
Suppose that $(A1)$--$(A5)$ hold.
Then we have
\begin{equation}
\label{eq:8}
 \lim_{h\searrow 0}\sum_{i=2}^N\lambda_i^h\gamma_K(T_i^h\sharp \mu_1 ,\mu_i)^2 =0.
\end{equation}
In particular, $T_i^h\sharp \mu_1$ weakly converges to $\mu_i$ as $h\searrow 0$ for each $i=2,\ldots,N$.
\end{thm}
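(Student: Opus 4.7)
The plan is to squeeze $M_{\lambda^h}(T^h)$ between the optimal Monge value $M^\ast:=M(T^\ast)$ from above and a liminf bound on its cost part from below, so that the penalty is forced to vanish. For the upper bound I would construct a recovery sequence in $\mathcal{N}^\ast$: under (A1) the Monge optimizer $T^\ast=(T_1^\ast,\ldots,T_N^\ast)$ exists with $T_i^\ast\in L^2(\mu_1)$ (cf.~\cite{gan-swi:1998}), and by (A5) one may pick $\tilde T^h\in\mathcal{N}^\ast$ with $\|\tilde T_i^h-T_i^\ast\|_{L^2(\mu_1)}<\delta_h$, where $\delta_h>0$ is selected so that $(\max_i\lambda_i^h)\delta_h\to 0$; this is possible because the accuracy $\delta_h$ is free in (A5). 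The Lipschitz assumption (A2), applied through the representation \eqref{eq:gamma}, yields the elementary bound
\[
\gamma_K(\tilde T_i^h\sharp\mu_1,\mu_i)^2\le 2L\,\|\tilde T_i^h-T_i^\ast\|_{L^1(\mu_1)}\le 2L\,\delta_h,
\]
so the penalty part of $M_{\lambda^h}(\tilde T^h)$ vanishes, while the quadratic structure of $c$ together with the $L^2$-convergence of $\tilde T_i^h$ to $T_i^\ast$ gives $\mathbb{E}[c(X,\tilde T_2^h(X),\ldots,\tilde T_N^h(X))]\to M^\ast$. The $\varepsilon^h$-optimality of $T^h$ then produces $\limsup_{h\searrow 0}M_{\lambda^h}(T^h)\le M^\ast$. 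Because the cost part is non-negative, this already implies that each $\lambda_i^h\gamma_K(T_i^h\sharp\mu_1,\mu_i)^2$ is bounded in $h$, hence $\gamma_K(T_i^h\sharp\mu_1,\mu_i)\to 0$ and, by (A3), $\nu_i^h:=T_i^h\sharp\mu_1$ converges weakly to $\mu_i$---this already gives the ``in particular'' assertion.

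For the matching lower bound, let $\pi^h$ denote the joint law of $(X,T_2^h(X),\ldots,T_N^h(X))$. It couples $(\mu_1,\nu_2^h,\ldots,\nu_N^h)$, hence
\[
\mathbb{E}[c(T^h)]=\int c\,d\pi^h\ge V(\mu_1,\nu_2^h,\ldots,\nu_N^h),
\]
where $V$ denotes the multi-marginal Kantorovich value. A standard Prokhorov and Portmanteau argument applied to the continuous non-negative integrand $c$ shows that $V$ is weakly lower-semicontinuous in its marginals, and together with the weak convergence $\nu_i^h\rightharpoonup\mu_i$ established above and the Monge--Kantorovich identification $V(\mu_1,\ldots,\mu_N)=M^\ast$ (from \cite{gan-swi:1998}), one obtains $\liminf_h V(\mu_1,\nu_2^h,\ldots,\nu_N^h)\ge M^\ast$. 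Combining the upper and lower bounds,
\[
\limsup_{h\searrow 0}\sum_{i=2}^N\lambda_i^h\gamma_K(\nu_i^h,\mu_i)^2 = \limsup_{h\searrow 0}\bigl(M_{\lambda^h}(T^h)-\mathbb{E}[c(T^h)]\bigr)\le M^\ast-M^\ast = 0,
\]
and non-negativity yields \eqref{eq:8}.

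The hard part will be the lower-semicontinuity step: since $c$ is unbounded, Portmanteau does not apply directly to the bare weak convergence of marginals, and one must work at the level of the joint couplings---extracting a tight, weakly convergent subsequence of $\varepsilon$-optimal couplings at marginals $(\mu_1,\nu_2^h,\ldots,\nu_N^h)$ (tightness coming from tightness of the $\nu_i^h$'s) and then applying Portmanteau to $c$ viewed as a non-negative lower-semicontinuous integrand on the joint weak limit. This is classical in optimal transport but is the one place where care is required; as an alternative one could first upgrade $\nu_i^h\rightharpoonup\mu_i$ to $W_2$-convergence, using the uniform bound $\mathbb{E}|T_j^h(X)-X|^2\le \mathbb{E}[c(T^h)]\le M_{\lambda^h}(T^h)$ together with Fatou, and then invoke continuity of $V$ in $W_2$.
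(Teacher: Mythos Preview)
Your argument is correct and follows a genuinely different route from the paper's. Both proofs share Step~(i), the upper bound $\inf_{T\in\mathcal{N}^*}M_{\lambda}(T)\le M(T^*)$, obtained from (A5) and the Lipschitz estimate on the MMD. After that the paths diverge. The paper does \emph{not} prove a matching lower bound on the cost part; instead it runs a self-contained contradiction argument: assuming the penalty does not vanish along a subsequence, it picks two indices $m_0<m_1$ with $\bar\lambda^{m_1}_i>7\bar\lambda^{m_0}_i$ and compares $M_{\bar\lambda^{m_0}}(\bar T^{m_1})$ with $M_{\bar\lambda^{m_1}}(\bar T^{m_1})$ (same map, different weights) to force $\kappa<\kappa$. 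This avoids any appeal to the Kantorovich relaxation, its stability, or the Monge--Kantorovich identity.

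Your sandwich argument, by contrast, imports two external facts: lower semicontinuity of the multi-marginal Kantorovich value under weak convergence of marginals (which you outline correctly via tightness of couplings and Portmanteau for the nonnegative continuous integrand $c$), and the equality $V(\mu_1,\ldots,\mu_N)=M(T^*)$ from \cite{gan-swi:1998}. The payoff is that your route actually delivers \emph{more} than the theorem claims: once $\limsup_h M_{\lambda^h}(T^h)\le M^\ast$ and $\liminf_h \mathbb{E}[c(T^h)]\ge M^\ast$ are in hand, together with $\mathbb{E}[c(T^h)]\le M_{\lambda^h}(T^h)$, you obtain $\mathbb{E}[c(T^h)]\to M^\ast$ and $M_{\lambda^h}(T^h)\to M^\ast$, i.e.\ convergence of the transport cost, which the paper's remark after Theorem~\ref{thm:1} explicitly lists as open. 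So your approach is less elementary but strictly more informative.

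One small caution: your suggested alternative of upgrading to $W_2$-convergence from the uniform bound $\mathbb{E}|T_j^h(X)-X|^2\le M_{\lambda^h}(T^h)$ is not quite complete as stated, since bounded second moments plus weak convergence do not by themselves yield $W_2$-convergence (one needs uniform integrability of $|x|^2$, not just boundedness). Your primary path through tightness of couplings and Portmanteau is the clean one and needs no such upgrade.
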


\begin{rem}
Theorem~\ref{thm:1} guarantees the asymptotic satisfaction of the marginal constraints enforced via the MMD penalty.
This result ensures that the learned maps asymptotically push forward the source distribution to the prescribed targets in distribution.
We note that the convergence of the transport cost value is not addressed here and remains an open direction for further analysis.
\end{rem}

\begin{proof}[Proof of Theorem $\ref{thm:1}$]
Step (i). First we will show that
\begin{equation}
\label{eq:9}
  \inf_{T\in\mathcal{N}^*}M_{\lambda}(T)\le M(T^*)
\end{equation}
for any $\lambda=(\lambda_2,\ldots,\lambda_N)$ with $\lambda_i>0$, $i=2,\ldots,N$.
To this end, let $T_i^*$ be such that $T^*=(T_1^*,\ldots,T_N^*)$. Since each $T_i^*\in L^2(\mu_1)$, by (A5), for any fixed $\varepsilon>0$ there exist
$\tilde{T}_2,\ldots,\tilde{T}_N\in\mathcal{N}$ such that
\[
 \sqrt{\mathbb{E}|\tilde{T}_i(X) - T^*_i(X)|^2}\le \varepsilon, \quad i=2,\ldots,N.
\]
It is straightforward to see that
\begin{align*}
 &|\mathbb{E}[c(X,\tilde{T}_2(X),\ldots,\tilde{T}_N(X))] - \mathbb{E}[c(X,T_2^*(X),\ldots,T_N^*(X))]| \\
 &\le 3\sum_{i,j=1}^N\mathbb{E}[(|\tilde{T}_i(X)| + |T_i^*(X)|)^2]^{1/2}\mathbb{E}[|\tilde{T}_j(X) - T_j^*(X)|^2]^{1/2},
\end{align*}
where $\tilde{T}_1(x)=T_1^*(x)=x$.
Further, by (A2), for $x,\tilde{x},y,\tilde{y}\in\mathbb{R}^d$,
\begin{align*}
&|K_i(x,y) - K_i(\tilde{x},\tilde{y})| \\
&\le |K(x,y) - K(\tilde{x},\tilde{y})| + \int_{\mathbb{R}^d}|K_i(x,y^{\prime}) - K_i(\tilde{x},y^{\prime})| \mu_1(dy^{\prime})
     + \int_{\mathbb{R}^d}|K_i(x^{\prime},y) - K_i(x^{\prime}, \tilde{y})| \mu_1(dx^{\prime}) \\
&\le 2C_K(|x-\tilde{x}| + |y-\tilde{y}|),
\end{align*}
where $C_K$ is the Lipschitz coefficient of $K$. Thus each $K_i$ is again Lipschitz continuous.
This together with $\gamma_K(T_i^*\sharp\mu_1, \mu_i)=0$ yield
\begin{align*}
  \gamma_K(\tilde{T}_i\sharp\mu_1, \mu_i)^2 &= \gamma_K(\tilde{T}_i\sharp\mu_1, \mu_i)^2 - \gamma_K(T_i^*\sharp\mu_1, \mu_i)^2
 = \mathbb{E}[K_i(\tilde{T}_i(X), \tilde{T}_i(\tilde{X})) - K_i(T^*_i(X), T^*_i(\tilde{X}))] \\
 &\le 4C_K\mathbb{E}|\tilde{T}_i(X)-T_i^*(X)|.
\end{align*}
Therefore, there exists a positive constant $C$ such that
\begin{align*}
 &M_{\lambda}(\tilde{T})- M(T^*) \\
 &= \mathbb{E}[c(X,\tilde{T}_2(X),\ldots,\tilde{T}_N(X))] - \mathbb{E}[c(X,T_2^*(X),\ldots,T_N^*(X))]
   + \sum_{i=2}^N\lambda_i\gamma_K(\tilde{T}_i\sharp\mu_1, \mu_i)^2 \\
 &\le C\varepsilon.
\end{align*}
Thus \eqref{eq:9} follows.

Step (ii).
We will show \eqref{eq:8}.
This claim can be proved by almost the same argument as that given in the proof of Theorem 3.1 in \cite{nak:2023}, but we shall give a proof for reader's convenience.
Assume contrary that
\begin{equation*}
 \limsup_{n\to\infty}\sum_{i=2}^N\lambda_i^{n}\gamma(T_i^{n}\sharp \mu_1,\mu_i)^2=5\delta
\end{equation*}
for some positive sequence $\{h_n\}_{n=1}^{\infty}$ such that $\lim_{n\to\infty}h_n=0$ and
for some $\delta>0$. Here we have denoted $\lambda_i^{n}=\lambda_i^{h_n}$ and $T_i^n=T_i^{h_n}$ for notational simplicity.
Then there exists a subsequence $\{n_k\}$ such that
\begin{equation*}
 \lim_{k\to\infty}\sum_{i=2}^N\lambda_i^{n_k}\gamma(T_i^{n_k}\mu_1,\mu_i)^2=5\delta.
\end{equation*}
Since $\gamma$ is a metric on $\mathcal{P}(\mathbb{R}^d)$, we have $\gamma(T^*_i\sharp\mu_1,\mu_i)=0$,
whence by \eqref{eq:9} $M^*_{\lambda}:=\inf_{T\in\mathcal{N}^*}M_{\lambda}(T)\le M(T^*)$ for any $\lambda$. This means
\begin{equation*}
 M_{\lambda^n}(T^n)\le M^*_{\lambda^n}+\varepsilon^n \le M(T^*)+\varepsilon^n,
\end{equation*}
where $\varepsilon^n=\varepsilon^{h_n}$.
Thus, the sequence $\{M_{\lambda^n}(T^n)\}_{n=1}^{\infty}$ is bounded, and so
there exists a further subsequence $\{n_{k_m}\}$ such that
\begin{equation*}
 \lim_{m\to\infty}M_{\bar{\lambda}^m}(\bar{T}^m)=\kappa:=\limsup_{k\to\infty} M_{\lambda^{n_k}}(T^{n_k}) < \infty,
\end{equation*}
where $\bar{\lambda}^m=\lambda^{n_{k_m}}$ and $\bar{T}^m=T^{n_{k_m}}$.
Now choose $m_0$ and $m_1$ such that
$\kappa< M_{\bar{\lambda}^{m_0}}(\bar{T}^{m_0}) + \delta$, $M_{\bar{\lambda}^{m_1}}(\bar{T}^{m_1})<\kappa +\delta$,
$\bar{\lambda}^{m_1}_i> 7\bar{\lambda}_i^{m_0}$, $i=2,\ldots,N$, and that
$3\delta + \bar{\varepsilon}_{m_0}< \sum_{i=2}^N\bar{\lambda}_i^{m_1}(\bar{\gamma}_i^{m_1})^2< 7\delta$,
where $\bar{\varepsilon}_m=\varepsilon_{n_{k_m}}$ and $\bar{\gamma}_i^m=\gamma(\bar{T}^m\sharp\mu_1,\mu_i)$.
With these choices it follows that
\begin{align*}
 \kappa &< M_{\bar{\lambda}^{m_0}}(\bar{T}^{m_0}) + \delta
 \le M^*_{\bar{\lambda}^{m_0}} + \bar{\varepsilon}_{m_0} + \delta
 \le M_{\bar{\lambda}^{m_0}}(\bar{T}^{m_1}) + \bar{\varepsilon}_{m_0} + \delta \\
 &= M(\bar{T}^{m_1}) + \sum_{i=2}^N\frac{\bar{\lambda}_i^{m_0}}{\bar{\lambda}_i^{m_1}}\bar{\lambda}_i^{m_1}(\bar{\gamma}_i^{m_1})^2 + \bar{\varepsilon}_{m_0} + \delta \\
 &< M(\bar{T}^{m_1}) + \frac{1}{7}\sum_{i=2}^N\bar{\lambda}_i^{m_1}(\bar{\gamma}_i^{m_1})^2 + \bar{\varepsilon}_{m_0}  + \delta
 < M(\bar{T}^{m_1}) + 2\delta + \bar{\varepsilon}_{m_0} \\
 &< M(\bar{T}^{m_1}) + \sum_{i=2}^N\bar{\lambda}_i^{m_1}(\bar{\gamma}_i^{m_1})^2 - \delta
 = M_{\bar{\lambda}^{m_1}}(\bar{T}^{m_1}) - \delta < \kappa,
\end{align*}
which is impossible.
\end{proof}

\bibliographystyle{hplain}
\bibliography{multi_ot}


\end{document}